\newtheorem{theorem}{Theorem}
\newtheorem{lemma}[theorem]{Lemma}
\newtheorem{them}{Theorem}
\newtheorem{lema}[them]{Lemma}
\theoremstyle{definition}
\theoremstyle{remark}
\newtheorem{remark}[theorem]{Remark}
\title{On the Roman Bondage Number of Graphs on surfaces}
\author {Vladimir Samodivkin\vspace{5mm}\\
Department of Mathematics, University of Architecture Civil
Engineering and Geodesy\\
Hristo Smirnenski 1 Blv., 1046 Sofia, Bulgaria\\
{\tt vlsam\_fte@uacg.bg}\vspace{3mm}\\}
\begin{document}

\maketitle

\begin{abstract}
  A Roman dominating function on a graph $G$ 
is a labeling $f : V(G) \rightarrow \{0, 1, 2\}$ such
that every vertex with label $0$ has a neighbor with label $2$. 
  The Roman domination number, $\gamma_R(G)$, of
$G$ is the minimum of $\Sigma_{v\in V (G)} f(v)$ over such functions.
The Roman bondage number $b_R(G)$ is  the cardinality of 
a smallest set of edges whose removal  from $G$ 
results in a graph with Roman domination number not equal to  $\gamma_R(G)$.
 In this paper we obtain upper bounds on $b_{R}(G)$ in terms of 
 (a) the average degree and maximum degree, 
  and
 (b) Euler characteristic, girth and maximum degree. 
  	We also show that the Roman bondage number of every  graph which admits 
  a $2$-cell embedding on a surface with non negative Euler characteristic does not exceed $15$. 
\end{abstract}

\noindent \small {\bf Keywords:} Roman domination, Roman bondage number, girth, average degee, Euler characteristic.
\smallskip

\noindent \small {\bf MSC 2010} 05C69.


\section{Introduction} \label{Intro}

All graphs considered in this article are finite, undirected, without loops or multiple edges. 
We denote the vertex set and the edge set of a graph $G$ by $V(G)$ and $ E(G),$  respectively. 
	Let $P_n$	 denote the path with $n$ vertices. 
	For any vertex $x$ of a graph $G$,  $N_G(x)$ denotes the set of all  neighbors of $x$ in $G$,  
	$N_G[x] = N_G(x) \cup \{x\}$ and the degree of $x$ is $d_G(x) = |N_G(x)|$. 
	The minimum and maximum degrees of the graph $G$ are denoted by $\delta(G)$ and $\Delta(G)$, respectively.
   For a graph $G$, let $x \in X \subseteq V(G)$. 
   A vertex $y \in V(G)$ is a $X$-private neighbor  of $x$ if $N_G[y] \cap X = \{x\}$. 
		The  $X$-private neighborhood  of $x$, denoted $pn_G[x,X]$, is  the set of all $X$-private neighbors of $x$.
An orientable compact 2-manifold $\mathbb{S}_h$ or orientable surface $\mathbb{S}_h$ (see \cite{Ringel}) of genus $h$ is
obtained from the sphere by adding $h$ handles. Correspondingly, a non-orientable compact
2-manifold $\mathbb{N}_q$ or non-orientable surface $\mathbb{N}_q$ of genus $q$ is obtained from the sphere by
adding $q$ crosscaps. Compact 2-manifolds are called simply surfaces throughout the paper. 
 The Euler characteristic is defined by
$\chi(\mathbb{S}_h) = 2 - 2h$, $h \geq 0$,  and $\chi(\mathbb{N}_q ) = 2 - q$, $q \geq 1$.
 The Euclidean plane $\mathbb{S}_0$, the projective plane $\mathbb{N}_1$, 
  the torus $\mathbb{S}_1$, and the Klein bottle $\mathbb{N}_2$ are 
  all the surfaces of nonnegative Euler characteristic.

A dominating set for a graph $G$ is a subset $D\subseteq V(G)$ of 
vertices such that every vertex not in $D$ is adjacent to
at least one vertex in $D$. The minimum cardinality of a dominating 
set is called the domination number of $G$ and is denoted by $\gamma (G)$.  
 A variation of domination called Roman domination was introduced by ReVelle \cite{re1,re2}.
 Also see ReVelle and Rosing \cite{rer} for an integer programming
formulation of the problem. 
The concept of Roman domination can be formulated in terms of graphs.
A Roman dominating function (RDF) on a
graph $G$ is a vertex labeling $f : V(G) \rightarrow \{0, 1, 2\}$
 such that every vertex with label $0$ has a neighbor with label $2$. 
 For a RDF $f$, let $ V_i^f = \{v \in V (G) : f(v) = i\}$ for i = 0, 1, 2. 
 Since this partition determines $f$, we can equivalently write 
 $f=(V_0^f; V_1^f; V_2^f)$.
 The weight $f(V(G))$ of a RDF $f$ on $G$ is the value $\Sigma_{v\in V(G)} f (v)$, 
  which equals $|V_1^f| + 2|V_2^f|$. 
The Roman domination number of a graph $G$, denoted by $\gamma_R(G)$, 
is the minimum weight of a Roman dominating function on $G$. 
A function $f=(V_0^f; V_1^f; V_2^f)$  is called a $\gamma_R$-function on $G$, 
if it is a Roman dominating function and $f(V (G)) = \gamma_R(G)$.  
One measure of the stability of the Roman domination number of a graph $G$
  under edge removal is the  Roman bondage number $b_{R}(G)$, defined by  Jafari Rad and Volkmann in \cite{rv0},  
	as  the cardinality of a smallest set of edges whose removal  from $G$ 
   results in a graph with Roman domination number not equal to  $\gamma_R(G)$. 
	For more information 	we refer the reader to 	\cite{akq,bhsx,hx,rv0,rv,samcmj}. 

 In this paper we obtain upper bounds for $b_{R}(G)$  in terms of 
  (a) average degree and maximum degree, 
 and (b) Euler characteristic, girth and maximum degree. 
 We also prove that  the Roman bondage number of every  graph which admits 
  a $2$-cell embedding on a surface with non negative Euler characteristic does not exceed $15$.

\section{Some known results}
\indent 

The following results are important for our investigations.

\begin{them} \label{56}
Let $G$ be a connected graph embeddable on a surface $\mathbb{M}$ whose
 Euler  characteristic $\chi(\mathbb{M})$ is nonnegative  and let $\delta(G) \geq 5$. 
 Then $G$ contains an edge $e=xy$ with $d_G(x) + d_G(y) \leq 11$
 if one of the following holds:
\begin{itemize}
\item[(i)]  \rm{({\bf Wernicke \cite{w} and Sanders \cite{sand}}, respectively)} 
             $\mathbb{M} \in \{\mathbb{S}_0, \mathbb{N}_1\}$.          
\item[(ii)] \rm{({\bf Jendrol' and Voss \cite{jv})} }
            $\mathbb{M} \in \{\mathbb{S}_1, \mathbb{N}_2\}$  and $\Delta (G) \geq 7$. 
\end{itemize}
\end{them}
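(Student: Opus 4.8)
The plan is to argue by contradiction using Euler's formula together with the discharging method, treating both cases uniformly through the value of $\chi(\mathbb{M})$. Fix a $2$-cell embedding of $G$ in a surface of nonnegative Euler characteristic $\chi$ (only the value of $\chi$ will enter the argument), with vertex, edge and face sets of sizes $n$, $m$ and $r$, so that $n - m + r = \chi$. Assign to each vertex $v$ the initial charge $\mu(v) = d_G(v) - 6$ and to each face $f$ the charge $\mu(f) = 2\ell(f) - 6$, where $\ell(f)$ denotes the length of $f$. Since $\sum_v d_G(v) = \sum_f \ell(f) = 2m$, the total charge satisfies
\[
\sum_{v} \mu(v) + \sum_{f} \mu(f) = 6m - 6n - 6r = -6\chi \le 0 .
\]
Every face has $\ell(f) \ge 3$, so $\mu(f) \ge 0$; and because $\delta(G) \ge 5$, the only vertices of negative initial charge are those of degree $5$, each carrying exactly $-1$.

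Next I would exploit the hypothesis to be contradicted: suppose that $d_G(x) + d_G(y) \ge 12$ for every edge $xy$. Then no two $5$-vertices are adjacent (their degree sum would be $10$), and in fact every neighbour of a $5$-vertex has degree at least $7$. The discharging rule is to let each vertex of degree at least $7$ pass a fixed amount to each adjacent $5$-vertex (and, when faces of length $\ge 4$ are present, to let those faces donate their nonnegative surplus to incident $5$-vertices). The cleanest instance is a triangulation, where every face carries zero charge and the neighbours of any vertex form a cycle; the independence of the $5$-vertices then bounds the number of $5$-neighbours of a degree-$d$ vertex by $\lfloor d/2\rfloor$, which is exactly what is needed to verify that each $5$-vertex is raised to charge $0$ while every vertex of degree $\ge 7$ retains strictly positive charge and every vertex of degree $6$ stays at $0$. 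A routine case check then confirms that after discharging no element of the embedding has negative charge.

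The contradiction is read off from the sign of the total. In case (i) the surfaces $\mathbb{S}_0$ and $\mathbb{N}_1$ have $\chi \ge 1$, so the total equals $-6\chi < 0$, contradicting the nonnegativity secured by discharging. In case (ii) one has $\chi = 0$, so the total is exactly $0$; since all final charges are nonnegative they must all vanish, which is impossible once the graph has a vertex of degree at least $7$, as such a vertex keeps positive charge. This is precisely where the hypothesis $\Delta(G) \ge 7$ is used, and it cannot be dropped: the $6$-regular triangulations of the torus and the Klein bottle have $\delta = \Delta = 6$ and every edge of degree sum $12$, so they are genuine exceptions.

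The step I expect to be the main obstacle is making the discharging rigorous for general embeddings and pinning down the boundary case $\chi = 0$, rather than the bookkeeping. For triangulations the rule above is transparent, but for an arbitrary embedding one must either fold the face charges $2\ell(f) - 6 \ge 0$ into the rule or reduce to the triangulated case; in the latter reduction care is needed so that the light edge produced is an edge of $G$ itself and not one of the chords added during triangulation. Settling this, together with checking that a single vertex of degree $\ge 7$ already destroys the exact balance on the torus and Klein bottle, is the crux; the orientable and non-orientable surfaces of a given Euler characteristic are handled identically, since only the value of $\chi$ enters the charge count.
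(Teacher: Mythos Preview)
The paper does not prove this statement at all: it appears in the section ``Some known results'' and is simply quoted from the literature, with attributions to Wernicke, Sanders, and Jendrol'--Voss. So there is no proof in the paper to compare against.

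That said, your discharging sketch is the standard route to results of this type and is along the lines of what one finds in the cited sources. The charge assignment $\mu(v)=d(v)-6$, $\mu(f)=2\ell(f)-6$ and the total $-6\chi$ are correct, and for triangulations your observation that consecutive neighbours in the rotation around a vertex are adjacent, hence at most $\lfloor d/2\rfloor$ of the neighbours of a degree-$d$ vertex can be $5$-vertices, makes the rule ``each vertex of degree $\ge 7$ sends $1/5$ to every adjacent $5$-vertex'' work cleanly: a $5$-vertex ends at $0$, and a vertex of degree $d\ge 7$ ends with at least $d-6-\lfloor d/2\rfloor/5>0$. The $\chi=0$ endgame (forcing all final charges to vanish, impossible once $\Delta\ge 7$) is also right, and your remark about $6$-regular toroidal and Klein-bottle triangulations correctly identifies why the hypothesis $\Delta\ge 7$ is needed in~(ii).

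The genuine gap you yourself flag is the reduction from an arbitrary $2$-cell embedding to the triangulated case. Your ``fold the face surplus into the rule'' suggestion is the right instinct but is not yet a proof: one must specify exactly how a face of length $\ell\ge 4$ distributes its charge $2\ell-6$ so that incident $5$-vertices are covered even when the $\lfloor d/2\rfloor$ bound on $5$-neighbours fails (which it can, once neighbours no longer form a cycle). Alternatively, if you triangulate by adding chords, you must ensure the light edge you find is an original edge of $G$; this typically requires choosing the added chords so that they are incident only with high-degree vertices, which needs its own argument. Either completion is routine but not automatic, and is exactly the technical content of the cited papers.
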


\begin{lema}[Rad and Volkmann \cite{rv1}]  \label{rv1}
If $G$ is a graph, then $ \gamma_R(G) \leq \gamma_R(G - e) \leq \gamma_R(G) + 1$ for any edge $e \in E(G)$.
\end{lema}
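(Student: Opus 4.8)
The plan is to prove the two inequalities separately, each by a direct construction on Roman dominating functions.

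For the left inequality $\gamma_R(G) \le \gamma_R(G-e)$, I would argue that every RDF of $G-e$ is already an RDF of $G$. Deleting an edge only shrinks neighborhoods, so $N_{G-e}(v) \subseteq N_G(v)$ for every vertex $v$. Hence if $g$ is an RDF of $G-e$ and $v$ is a vertex with $g(v)=0$, the neighbor of $v$ carrying label $2$ in $G-e$ is still a neighbor of $v$ in $G$, so $g$ satisfies the Roman condition in $G$ as well. Taking $g$ to be a $\gamma_R$-function of $G-e$ yields $\gamma_R(G) \le g(V(G)) = \gamma_R(G-e)$.

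For the right inequality $\gamma_R(G-e) \le \gamma_R(G)+1$, write $e=xy$ and start from a $\gamma_R$-function $f=(V_0^f;V_1^f;V_2^f)$ of $G$. The key observation is to pin down exactly when $f$ fails to be an RDF of $G-e$. Since the labels are unchanged and only the edge $xy$ disappears, the Roman condition can break for a vertex $v$ only if $v\in V_0^f$ and its \emph{unique} label-$2$ neighbor in $G$ was the other endpoint of $e$. This forces $v\in\{x,y\}$, and it cannot occur at both endpoints simultaneously: if $x\in V_0^f$ lost its witness then $f(y)=2$, which rules out $y\in V_0^f$, and symmetrically. Thus at most one endpoint of $e$ becomes unprotected in $G-e$.

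Finally I would repair $f$ with a single unit of weight. If $f$ is already an RDF of $G-e$, then $\gamma_R(G-e)\le \gamma_R(G)\le\gamma_R(G)+1$ and we are done. Otherwise, without loss of generality the unprotected vertex is $x$, and I define $f'$ by setting $f'(x)=1$ and $f'(v)=f(v)$ for $v\neq x$. Raising $x$ out of $V_0^f$ removes its own Roman constraint, and since no label-$2$ vertex has been deleted, the constraints of all remaining label-$0$ vertices are untouched; hence $f'$ is an RDF of $G-e$ of weight $\gamma_R(G)+1$, giving $\gamma_R(G-e)\le \gamma_R(G)+1$. I expect the only point needing care — rather than a genuine obstacle — to be the verification that at most one endpoint of $e$ loses its protection, so that a single $+1$ correction always suffices; the remainder is a routine check that relabelling one vertex disturbs no other.
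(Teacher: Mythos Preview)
Your argument is correct. Both inequalities are handled cleanly: the left one by the monotonicity observation that any RDF of $G-e$ is automatically an RDF of $G$, and the right one by noting that at most one endpoint of $e=xy$ can lose its Roman witness (since if $x\in V_0^f$ is uncovered in $G-e$ then necessarily $f(y)=2$, so $y\notin V_0^f$), and then bumping that single endpoint from label~$0$ to label~$1$. The check that $f'$ is an RDF of $G-e$ is complete, because every remaining label-$0$ vertex $v$ is distinct from both $x$ and $y$, hence $N_{G-e}(v)=N_G(v)$ and its label-$2$ witness is preserved.

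As for comparison with the paper: the paper does not actually supply a proof of this lemma. It is quoted in Section~2 (``Some known results'') as a result of Rad and Volkmann~\cite{rv1} and used as a black box (notably in the proof of Lemma~\ref{erc21}(ii)). Your proof is the standard one and is exactly what one would expect the original source to contain.
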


According to the effects of vertex removal on the Roman domination number of a graph $G$, let 

$\bullet$\ $V_{R}^+(G) = \{v \in V(G) \mid \gamma_{R} (G-v) > \gamma_{R} (G)\}$, 

$\bullet$\ $V_{R}^-(G) = \{v \in V(G) \mid \gamma_{R} (G-v) < \gamma_{R} (G)\}$,

$\bullet$\ $V_{R}^0(G) = \{v \in V(G) \mid \gamma_{R} (G-v) = \gamma_{R} (G)\}$.

Clearly $\{ V_{R}^-(G), V_{R}^0(G), V_{R}^+(G)\}$ is a partition of $V(G)$.

\begin{them}[Rad and Volkmann \cite{rv1}]  \label{vvv}
Let $G$ be a graph of order at least $2$. 
\begin{itemize}
\item[(i)] If $v \in V_{R}^+(G)$ then for every  $\gamma_{R}$-function 
$f = (V_0^f; V_1^f; V_2^f)$  on $G$, $|pn_G[v, V_2^f] \cap V_0^f| \geq 3$ 
and $f(v) = 2$. 
\item[(ii)] For any vertex $u \in V(G)$, $\gamma_{R} (G) -1 \leq  \gamma_{R} (G-u) $.
\end{itemize}
\end{them}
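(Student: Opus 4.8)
The plan is to handle the two parts separately, proving the easier lower bound (ii) first by a direct extension argument and then using a relabelling argument for the structural statement (i). For part (ii), I would start from any $\gamma_R$-function $g=(V_0^g;V_1^g;V_2^g)$ on $G-u$ and simply reinstate the deleted vertex: define a labelling on $G$ that agrees with $g$ on $V(G)\setminus\{u\}$ and assigns the label $1$ to $u$. Because $u$ is absent from $G-u$, every vertex labelled $0$ by $g$ keeps whatever label-$2$ neighbour it had, so the extension is a valid RDF on $G$ of weight $\gamma_R(G-u)+1$. Hence $\gamma_R(G)\le\gamma_R(G-u)+1$, which is exactly the claimed inequality.

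For part (i), I would fix an arbitrary $\gamma_R$-function $f=(V_0^f;V_1^f;V_2^f)$ on $G$ and assume $v\in V_{R}^+(G)$, that is $\gamma_R(G-v)\ge\gamma_R(G)+1$. The first step is to show $f(v)=2$ by excluding the other two labels. If $f(v)=1$, then the restriction of $f$ to $G-v$ is already an RDF there, since $v\notin V_2^f$ means no label-$0$ vertex was dominated by $v$; this RDF has weight $\gamma_R(G)-1$. If instead $f(v)=0$, the same restriction is an RDF of weight $\gamma_R(G)$. In either case $\gamma_R(G-v)\le\gamma_R(G)$, contradicting $v\in V_{R}^+(G)$, so $f(v)=2$ is forced.

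To bound the private neighbourhood, set $P=pn_G[v,V_2^f]\cap V_0^f$ and argue by contradiction, assuming $|P|\le 2$. I would construct a labelling $h$ on $G-v$ by deleting $v$, changing the label of every vertex of $P$ from $0$ to $1$, and leaving all other labels unchanged. By the definition of $pn_G[v,V_2^f]$, the label-$0$ vertices whose only label-$2$ closed neighbour was $v$ are precisely those in $P$, and these are now self-covered by their label $1$; every remaining label-$0$ vertex still has a label-$2$ neighbour different from $v$, which survives in $G-v$. Thus $h$ is a legitimate RDF on $G-v$ of weight $\gamma_R(G)-2+|P|\le\gamma_R(G)$, again contradicting $\gamma_R(G-v)\ge\gamma_R(G)+1$. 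Therefore $|P|\ge 3$.

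The main obstacle, and the step that needs the most care, is verifying that the relabelled function $h$ is genuinely an RDF on $G-v$: one must confirm that deleting $v$ does not orphan any label-$0$ vertex lying outside $P$, and this is exactly the point at which the definition of the $V_2^f$-private neighbourhood does the work. Once that check is in place, the remaining arguments reduce to elementary bookkeeping on the weights.
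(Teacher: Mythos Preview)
Your proof is correct. Note, however, that the paper itself does not supply a proof of this theorem: it appears in Section~2 (``Some known results'') as a result quoted from Rad and Volkmann~\cite{rv1}, so there is no in-paper proof to compare against. The argument you give is the standard one and matches what one finds in the original source: part~(ii) by extending a $\gamma_R$-function of $G-u$ with the label~$1$ at $u$, and part~(i) by first excluding $f(v)\in\{0,1\}$ via restriction, then relabelling the at most two private $0$-neighbours of $v$ with~$1$ to reach a contradiction. Your identification of the key verification---that vertices of $V_0^f\setminus P$ retain a label-$2$ neighbour in $G-v$ precisely because they lie outside $pn_G[v,V_2^f]$---is exactly the point where the private-neighbourhood definition is used, and you handle it correctly.
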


\begin{them}[ Hansberg, Rad and Volkmann \cite{hrv}]\label{vc}
Let $v$ be a vertex of a graph $G$. Then
$\gamma_R(G - v)  <  \gamma_R(G)$  if and only if there is a $\gamma_R$-function
$f = (V_0, V_1, V_2)$  on $G$ such that $v \in V_1$.
\end{them}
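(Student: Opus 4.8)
The plan is to prove both implications directly by transferring a single optimal Roman dominating function between $G$ and $G-v$, using nothing beyond the definition of an RDF together with the lower bound $\gamma_R(G-v)\geq\gamma_R(G)-1$ supplied by Theorem~\ref{vvv}(ii).

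For the ``if'' direction, I would start from a $\gamma_R$-function $f=(V_0,V_1,V_2)$ on $G$ with $v\in V_1$, and simply restrict $f$ to $V(G)\setminus\{v\}$, calling the result $g$. The only point to verify is that $g$ is a Roman dominating function of $G-v$: every $u\neq v$ with $f(u)=0$ has, by definition, a neighbor $w$ with $f(w)=2$; since $f(v)=1\neq 2$, necessarily $w\neq v$, so $w$ survives in $G-v$ and still witnesses $u$. Hence $g$ is an RDF of $G-v$ of weight $f(V(G))-f(v)=\gamma_R(G)-1$, which gives $\gamma_R(G-v)\leq\gamma_R(G)-1<\gamma_R(G)$.

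For the ``only if'' direction, suppose $\gamma_R(G-v)<\gamma_R(G)$. Combining this with Theorem~\ref{vvv}(ii) forces $\gamma_R(G-v)=\gamma_R(G)-1$. I would then take a $\gamma_R$-function $g$ on $G-v$ and extend it to $G$ by setting $f(v)=1$ and $f(x)=g(x)$ for $x\neq v$. Again the only thing to check is that $f$ is an RDF of $G$: any $u\neq v$ with $f(u)=0$ keeps its label-$2$ neighbor from $G-v$, while $v$ carries label $1$ and so imposes no requirement of its own. Thus $f$ is an RDF of $G$ of weight $(\gamma_R(G)-1)+1=\gamma_R(G)$, i.e. a $\gamma_R$-function, and it satisfies $v\in V_1$.

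There is essentially no hard step here; the whole argument is the transfer of one optimal function across the vertex deletion. The only place demanding care is the preservation of the domination structure: in the forward direction one must observe that a label-$1$ vertex never serves as the label-$2$ witness of a label-$0$ vertex, so deleting it cannot destroy the RDF property, and in the reverse direction that re-inserting $v$ with label $1$ creates no uncovered vertex. The bound $\gamma_R(G-v)\geq\gamma_R(G)-1$ from Theorem~\ref{vvv}(ii) is exactly what pins the weight down to $\gamma_R(G)$, so that the extended function is genuinely optimal rather than merely feasible.
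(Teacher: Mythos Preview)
Your proof is correct. Note, however, that the paper does not give its own proof of this statement: it is quoted in Section~2 as a known result of Hansberg, Rad and Volkmann \cite{hrv}, so there is no in-paper argument to compare against. Your argument is the natural one and is essentially the standard proof of this fact; both directions are handled cleanly, and your use of Theorem~\ref{vvv}(ii) to force $\gamma_R(G-v)=\gamma_R(G)-1$ in the ``only if'' direction is exactly what is needed to guarantee optimality of the extended function.
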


\begin{them}[Rad and Volkmann \cite{rv0}] \label{erc3} 
If $G$ is a graph, and $x, y,z$ is a path of length $2$ in $G$, then 
\[
b_R(G)  \leq d_G(x) + d_G(y) + d_G(z) - 3 - |N_G(x)\cap N_G(y)|.
\]
\end{them}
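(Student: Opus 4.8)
The plan is to exhibit a single set $B$ of edges of the required size whose removal strictly increases the Roman domination number. Since Lemma~\ref{rv1} applied repeatedly gives $\gamma_R(G-B)\ge\gamma_R(G)$ for every edge set $B$, it suffices to find $B$ with $|B|\le d_G(x)+d_G(y)+d_G(z)-3-|N_G(x)\cap N_G(y)|$ and $\gamma_R(G-B)>\gamma_R(G)$; then $B$ witnesses $b_R(G)\le|B|$. Writing $C=N_G(x)\cap N_G(y)$ and $t=|C|$, I would take
\[
B=\{xu:u\in N_G(x)\}\cup\{zu:u\in N_G(z)\setminus\{y\}\}\cup\{yu:u\in N_G(y)\setminus(N_G[x]\cup\{z\})\},
\]
i.e.\ delete every edge at $x$, every edge at $z$ except $yz$, and every edge at $y$ that does not reach $x$, $z$, or a common neighbour of $x$ and $y$. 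The first routine step is the edge count: a short inclusion--exclusion shows $|B|=d_G(x)+d_G(y)+d_G(z)-3-t$ exactly. The only point needing care is the chord case $xz\in E(G)$, where $xz$ is double counted between the first two families but $z$ simultaneously drops out of the third family, so the two corrections cancel and the count is unchanged. Thus $|B|$ meets the claimed bound with equality.

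Next I describe $H=G-B$: here $x$ is isolated, $z$ is a leaf whose only neighbour is $y$, and $N_H(y)\subseteq C\cup\{z\}$. To prove $\gamma_R(H)>\gamma_R(G)$ I argue by contradiction, assuming $\gamma_R(H)=\gamma_R(G)$ and fixing a $\gamma_R$-function $f$ of $H$; since $x$ is isolated, minimality forces $f(x)=1$. The clean case is when some $G$-neighbour $u$ of $x$ satisfies $f(u)=2$: then setting $g=f$ except $g(x)=0$ yields a Roman dominating function of $G$ (the extra edges of $G$ can only help, and $x$ dominated nobody in $H$) of weight $\gamma_R(G)-1$, a contradiction.

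The main obstacle is the remaining case, in which no $G$-neighbour of $x$ carries label $2$; in particular $f(y)\le 1$, and every $w\in C\subseteq N_G(x)$ has $f(w)\le 1$. Here I expect the isolated/pendant structure engineered into $H$ to force enough weight onto $\{x,y,z\}$ to be rerouted: since $N_H(y)\subseteq C\cup\{z\}$ with all $C$-labels at most $1$, validity of $f$ at $y$ and at the leaf $z$ forces $f(x)+f(y)+f(z)\ge 3$. Replacing these three values by $g(x)=0$, $g(y)=2$, $g(z)=0$ (and $g=f$ elsewhere) then gives a Roman dominating function of $G$ of weight at most $\gamma_R(G)-1$. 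The delicate bookkeeping here --- the part I would check most carefully --- is that lowering the labels of $x,y,z$ leaves no other vertex undominated: in $H$ the vertex $x$ is isolated, $y$ has label at most $1$, and $z$ has no $H$-neighbour other than $y$, so the only vertex these three could dominate in $H$ is $y$ itself, which is covered by $g(y)=2$; every other vertex retains its old label-$2$ dominator, which still lies in $G$. This contradiction completes the argument and gives $b_R(G)\le|B|=d_G(x)+d_G(y)+d_G(z)-3-|N_G(x)\cap N_G(y)|$.
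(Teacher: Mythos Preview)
The paper does not contain a proof of this statement: Theorem~\ref{erc3} is quoted from Rad and Volkmann~\cite{rv0} in the ``Some known results'' section and is used later without being reproved. So there is no in-paper argument to compare against.

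That said, your proof is correct and is essentially the standard argument for this type of bondage inequality. The edge set $B$ is well chosen, the count $|B|=d_G(x)+d_G(y)+d_G(z)-3-t$ is right in both the non-chord and chord cases, and the description of $H=G-B$ (with $x$ isolated, $z$ a leaf at $y$, and $N_H(y)\subseteq C\cup\{z\}$) is accurate. Your two-case analysis of a hypothetical $\gamma_R$-function $f$ on $H$ goes through: in Case~A the relabelling $g(x)=0$ works directly; in Case~B, the constraints $f(x)=1$, $f(y)\le 1$, and $f(w)\le 1$ for $w\in C$ do force $f(x)+f(y)+f(z)\ge 3$ via the leaf $z$ and the restricted neighbourhood of $y$, and the relabelling $(g(x),g(y),g(z))=(0,2,0)$ is a valid RDF on $G$ because the only $H$-vertex that any of $x,y,z$ could have been dominating is $y$ itself. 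One small expository nit: in the chord case your phrase ``$z$ simultaneously drops out of the third family'' is not quite what happens --- the edge $yz$ was never in the third family to begin with; the actual cancellation is that $z\in C$ when $xz\in E(G)$, so $|\{x,z\}\cup C|$ drops from $t+2$ to $t+1$, which exactly compensates for the double-count of $xz$. This does not affect correctness.
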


The average  degree $ad(G)$ of a graph $G$ is defined as $ad(G) = 2|E(G)|/|V(G)|$. 

\begin{them} [Hartnell and Rall~\cite{hr})] \label{hra}
For any connected nontrivial graph $G$, there exists a pair of vertices, 
say $u$ and $v$, that are either adjacent or at distance $2$ 
from each other, with the property that $d_G(u) + d_G(v) \leq 2ad(G)$.
\end{them}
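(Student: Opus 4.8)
The plan is to rephrase the claim as a statement about the \emph{average} of a pair. Writing $\bar d=ad(G)$, I want two vertices $u,v$ with $\mathrm{dist}(u,v)\in\{1,2\}$ and $\tfrac12\bigl(d_G(u)+d_G(v)\bigr)\le\bar d$; that is, a close pair whose average degree does not exceed the global average degree $\bar d=\tfrac1{|V(G)|}\sum_{x}d_G(x)$. The main engine will be a bound coming from paths of length two. Fix a vertex $z$ with $d_G(z)\ge 2$ and let $x,y$ be its two neighbours of smallest degree; then $x,y$ lie at distance at most $2$ (through $z$), and since the two smallest values in $N_G(z)$ are each at most the average over $N_G(z)$,
\[
d_G(x)+d_G(y)\ \le\ \frac{2}{d_G(z)}\sum_{w\in N_G(z)}d_G(w).
\]
Hence it suffices to exhibit a vertex $z$ with at least two neighbours whose neighbourhood has average degree at most $\bar d$.

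First I would dispose of the degenerate cases: if $G$ is regular then every edge $uv$ already gives $d_G(u)+d_G(v)=2\bar d$, and if $\Delta(G)\le 1$ then $G=K_2$ and the single edge works. In general I would start from a vertex $v$ of minimum degree $\delta=\delta(G)\le\bar d$. If $v$ has a neighbour $u$ with $d_G(u)\le 2\bar d-\delta$, then the edge $uv$ satisfies $d_G(u)+d_G(v)\le 2\bar d$ and we are done. Otherwise every neighbour of $v$ has degree exceeding $2\bar d-\delta$ (which is at least $\bar d$), and I would move to the second neighbourhood: if some vertex $w$ at distance exactly $2$ from $v$ satisfies $d_G(w)\le 2\bar d-\delta$, then the pair $\{v,w\}$ works. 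The remaining situation is that \emph{every} vertex within distance $2$ of $v$ has degree greater than $2\bar d-\delta$.

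The hard part is this last situation, in which the low-degree vertex $v$ is surrounded by a dense cluster of high-degree vertices. Here I would pass to a global count: under the hypothesis that no close pair has degree sum at most $2\bar d$, the set of vertices of degree at most $\bar d$ is independent, each vertex has at most one such neighbour, and each such vertex forces, in its closed second neighbourhood, a set of vertices of degree larger than $2\bar d-d_G(v)$. Summing the degree surplus of these clusters against the total degree $\sum_x d_G(x)=|V(G)|\,\bar d$ should force a contradiction, with the estimate tight precisely for extremal configurations such as a clique carrying one pendant vertex at each of its vertices, where the bound $2\bar d$ is met with equality by a pendant edge. The genuine obstacle is that this is exactly where the ``friendship paradox'' bites: neighbourhood averages tend to \emph{exceed} $\bar d$, so the length-two-path bound alone never produces the pair in such graphs. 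The closing argument must therefore combine the edge bound at the minimum-degree vertex with the second-neighbourhood count, and track the equality cases carefully to land on the sharp constant $2\,ad(G)$.
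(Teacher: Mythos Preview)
This theorem appears in the paper only as a quoted result of Hartnell and Rall, cited to \cite{hr}; the paper gives no proof of it. There is therefore no in-paper argument to compare your attempt against.

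As for the attempt itself: it is a plan rather than a proof, and you essentially say so. You reduce to the case in which the minimum-degree vertex $v$ has every vertex within distance $2$ of degree exceeding $2\bar d-\delta$, and then propose to derive a contradiction by a global counting argument that you never actually carry out. The sentences ``summing the degree surplus \ldots\ should force a contradiction'' and ``the closing argument must therefore combine the edge bound \ldots\ with the second-neighbourhood count'' describe what would have to happen, not an argument that it does. Your own remark about the friendship paradox pinpoints precisely why this residual case is non-trivial, and your length-two-path ``engine'' via a vertex $z$ with neighbourhood average at most $\bar d$ is only a sufficient condition that can fail in the tight examples you mention (in $K_k$ with one pendant per vertex, every vertex of degree at least $2$ has neighbourhood average $k-1+1/k>(k+1)/2=\bar d$ for $k\ge 3$). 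So the proposal locates the crux correctly but leaves it open; to complete it you would need to supply the promised counting argument explicitly and check it against the equality cases.
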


The girth of a graph $G$ 
is the length of a shortest
cycle in $G$; the girth of a forest is $\infty$. 
\begin{lema}[Samodivkin ~\cite{samajc}]\label{SGZ}
Let $G$ be a connected graph embeddable on a surface $\mathbb{M}$ whose Euler
 characteristic $\chi$ is as large as possible and let the girth of $G$ is $k < \infty$.
 Then:
 \[
  ad(G) \leq \frac{2k}{k-2}(1 - \frac{\chi}{|V(G)|}).
 \]
\end{lema}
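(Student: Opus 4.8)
The plan is to run the classical Euler's-formula counting argument on the surface realizing the largest Euler characteristic and then convert the resulting bound on $|E(G)|$ into the stated bound on $ad(G)=2|E(G)|/|V(G)|$; throughout I write $n=|V(G)|$, $m=|E(G)|$. First I would fix a $2$-cell (cellular) embedding of $G$ on a surface $\mathbb{M}$ whose Euler characteristic $\chi$ is as large as possible. The reason for taking $\chi$ maximal is precisely that a minimum-genus embedding of a connected graph is cellular; equivalently, a non-cellular embedding can always be compressed to an embedding of strictly larger Euler characteristic, so the extremal surface necessarily carries a $2$-cell embedding. Granting this, Euler's polyhedral formula applies and gives
\[
 n - m + \varphi = \chi,
\]
where $\varphi$ denotes the number of faces.

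Next I would bound $\varphi$ from above using the girth. In a $2$-cell embedding every face is bounded by a closed facial walk, and the edge--face incidence count reads $\sum_{F}\ell(F)=2m$, the sum running over all faces with $\ell(F)$ the length of the facial walk of $F$ (each edge contributing exactly twice overall). The key claim is that, since $G$ is connected and $k<\infty$ so that $G$ contains a cycle, every facial walk has length at least $k$. Combining this with the incidence count yields $k\varphi\le 2m$, that is $\varphi\le 2m/k$.

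Substituting into Euler's formula gives $\chi\le n-m+2m/k=n-m(k-2)/k$, whence
\[
 m \le \frac{k}{k-2}\,(n-\chi).
\]
Multiplying through by $2/n$ (and using $k\ge 3$, so that $k-2>0$ and the inequality direction is preserved) turns this into
\[
 ad(G)=\frac{2m}{n}\le \frac{2k}{k-2}\Bigl(1-\frac{\chi}{n}\Bigr),
\]
which is exactly the desired inequality.

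The step I expect to be the main obstacle is the facial-length claim, because the presence of bridges means a facial walk may traverse an edge twice and need not be a cycle; the danger is a face whose boundary is a short tree-walk of length $<k$. I would rule this out as follows: if a facial walk contained no cyclic edge, its edge set would be a subtree traversed twice as an Euler tour, and a rotation-system/topological argument shows that such a face would have to be the unique face of a tree embedding, forcing $G$ itself to be a tree and contradicting $k<\infty$. Hence every facial walk contains a cycle and so has length at least $k$. Pinning this down carefully, together with the standard fact that the $\chi$-maximal embedding is cellular, is where the real work lies; the counting in the remaining steps is routine.
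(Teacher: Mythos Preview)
The paper does not supply its own proof of this lemma: it is quoted verbatim as a result from \cite{samajc}, so there is no in-paper argument to compare your attempt against. Your argument is the standard Euler-formula/girth counting and is correct. In particular, your handling of the facial-length issue is on the right track and can be made precise as follows: if the boundary walk of some face $F$ used only edges of a subtree $T$ (each traversed twice, so each a bridge with $F$ on both sides), then at every vertex $v$ of $T$ each corner adjacent to a $T$-edge is occupied by $F$, and since $F$'s walk uses only $T$-edges, the edge on the other side of that corner must also lie in $T$; propagating around the rotation at $v$ forces every edge incident with $v$ into $T$, and by connectedness $G=T$, contradicting $k<\infty$. Hence every facial walk contains a cycle and has length at least $k$, and the rest of your computation goes through verbatim.
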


Given a graph $G$ of order $n$, let
$\widehat{G}$ be the graph of order $5n$ obtained from $G$ by
attaching the central vertex of a copy of $P_5$, to each vertex of $G$. 

\begin{lema} [S.Akbari, M. Khatirinejad and S. Qajar \cite{akq}]\label{what} 
Let  $G$ be a graph of order $n$, $n \geq 2$. Then $\gamma(\widehat{G}) = 2n$,   
$\gamma_{R} (\widehat{G}) = 4n$ and $b_R(\widehat{G}) = \delta (G) + 2$. 
 \end{lema}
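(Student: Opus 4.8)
The plan is to treat the three equalities in turn, reading the construction (as forced by $|V(\widehat G)|=5n$) as identifying the central vertex of each copy of $P_5$ with the vertex of $G$ it is attached at. For $v\in V(G)$ write the attached copy as the path $b_v a_v v c_v d_v$, so that $a_v,c_v$ are the neighbours of $v$ inside its gadget and $b_v,d_v$ are leaves; thus $d_{\widehat G}(v)=d_G(v)+2$. For the domination number, the set $\{a_v,c_v: v\in V(G)\}$ dominates every gadget (hence all of $\widehat G$) and has size $2n$, while the two leaves $b_v,d_v$ of each gadget lie at distance two and force at least two vertices per gadget; this gives $\gamma(\widehat G)=2n$. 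The function assigning $2$ to every $a_v$ and $c_v$ and $0$ elsewhere is an RDF of weight $4n$, so $\gamma_R(\widehat G)\le 4n$.

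For the matching lower bound I would show that every RDF $f$ puts total weight at least $4$ on each gadget. Fix $v$ and look at the two pendant paths $a_vb_v$ and $c_vd_v$. If $f(v)\le 1$, then each such path carries weight at least $2$: a leaf labelled $0$ forces its neighbour to be $2$, and a leaf labelled $1$ forces its neighbour to be at least $1$ (otherwise that neighbour, having no neighbour labelled $2$, is undominated). If instead $f(v)=2$, each pendant path still carries at least $1$. In both cases the five gadget labels sum to at least $4$, so summing over the $n$ gadgets yields $\gamma_R(\widehat G)\ge 4n$, and hence equality. I note that this bound uses only edges internal to the gadgets, so it persists verbatim in every edge-deleted subgraph: $\gamma_R(\widehat G-B)\ge 4n$ for all $B\subseteq E(\widehat G)$. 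Combined with Lemma \ref{rv1}, this means a set $B$ changes the Roman domination number precisely when $\widehat G-B$ admits no RDF of weight $4n$, i.e.\ when some gadget cannot be given weight exactly $4$ consistently.

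For $b_R(\widehat G)\le \delta(G)+2$ I would delete all $d_{\widehat G}(v)=\delta(G)+2$ edges at a vertex $v$ of minimum degree $\delta(G)$ in $G$. This isolates $v$ (forcing $f(v)\ge 1$) and breaks its gadget into the two isolated paths $a_vb_v$ and $c_vd_v$, each needing weight $2$, while the remaining graph is $\widehat{G-v}$ with Roman domination number $4(n-1)$. Hence the new value is $1+2+2+4(n-1)=4n+1>4n$, so $\delta(G)+2$ deletions suffice.

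The substantive part is $b_R(\widehat G)\ge \delta(G)+2$: I must show that deleting any $B$ with $|B|\le \delta(G)+1$ still leaves an RDF of weight $4n$, i.e.\ lets every gadget keep weight $4$. The key numerical fact is $|B|\le \delta(G)+1<\delta(G)+2\le d_{\widehat G}(v)$ for every $v$, so no center is isolated in $\widehat G-B$. The strategy is to label: give $f(v)=2$ to each center both of whose internal edges $va_v,vc_v$ survive (such a gadget then costs $4$ via $b_v=d_v=1$), and for a center $v$ that lost an internal edge use a tailored weight-$4$ pattern. If the surviving internal edge, say $vc_v$, is present one sets $c_v=2$ to cover $v$ internally and spends the remaining $2$ on the damaged arm, keeping the gadget self-sufficient; if both $va_v,vc_v$ are gone (which already consumes two edges of $B$) the center must be dominated from outside, and I would label a surviving external neighbour $2$. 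The main obstacle is exactly this last coordination: one must guarantee that every center requiring outside help can be assigned a neighbour that is itself labelled $2$ at no extra cost, i.e.\ a neighbour whose own gadget is undamaged enough to carry $f=2$ at weight $4$. I expect to resolve it by a counting argument: a center needing outside help spends at least two edges of $B$ on its own internal edges, so such centers are few, each retains at least one surviving external neighbour, and the budget $\delta(G)+1$ is too small to simultaneously damage a center and all of its potential helpers, the threshold $\delta(G)+2$ being precisely the cost of isolating a minimum-degree center. Verifying that these demands can always be met without conflict, and that no other deletion pattern forces a gadget above weight $4$, is the crux of the proof.
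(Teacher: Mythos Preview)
The paper does not prove this lemma at all; it is quoted verbatim from Akbari, Khatirinejad and Qajar \cite{akq} and used as a black box (see Remark~\ref{rem1} and Remark~\ref{dmax}). So there is no ``paper's own proof'' to compare against, and your proposal must stand on its own.

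Your arguments for $\gamma(\widehat G)=2n$, $\gamma_R(\widehat G)=4n$, and $b_R(\widehat G)\le\delta(G)+2$ are correct and efficient. Your reading of the construction (identification of the central vertex, so that $d_{\widehat G}(v)=d_G(v)+2$) is the right one, and the gadget-by-gadget lower bound $\gamma_R(\widehat G-B)\ge 4n$ for every $B\subseteq E(\widehat G)$ is a genuinely useful reduction.

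The only part that is not yet a proof is the lower bound $b_R(\widehat G)\ge\delta(G)+2$, which you yourself flag as the crux. Your instinct is sound and the counting argument you allude to does go through; here is the clean way to close it. Fix $B$ with $|B|\le\delta(G)+1$. Call a center $u$ \emph{intact} if $ua_u,uc_u\notin B$. On each intact gadget set $f(u)=2$, $f(a_u)=f(c_u)=0$, $f(b_u)=f(d_u)=1$ (this costs $4$ regardless of whether the pendant edges $a_ub_u,c_ud_u$ survive). On a center $u$ with exactly one internal edge deleted, say $ua_u\in B$, set $f(c_u)=2$, $f(u)=f(d_u)=0$, and spend the remaining $2$ on the detached arm $\{a_u,b_u\}$; again the cost is $4$ and $u$ is dominated internally. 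Finally, suppose $v$ has both internal edges in $B$. For each external neighbour $w$ of $v$, the three edges $vw,\,wa_w,\,wc_w$ are pairwise distinct and disjoint from $\{va_v,vc_v\}$, and these triples are disjoint as $w$ varies. Hence each of the $|B|-2\le\delta(G)-1$ remaining edges of $B$ can spoil at most one of the $d_G(v)\ge\delta(G)$ external neighbours of $v$, so some neighbour $w$ is intact and the edge $vw$ survives. Since $w$ is intact, $f(w)=2$ already, and setting $f(v)=0$ with weight $2$ on each detached arm keeps $v$'s gadget at cost $4$. This handles every center, and since intact centers are \emph{all} labelled $2$, there is no coordination conflict among helpers. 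Thus $\gamma_R(\widehat G-B)=4n$ and $b_R(\widehat G)\ge\delta(G)+2$.
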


\section{Upper bounds}

A graph $G$ of order at least two is Roman domination vertex critical
 if removing any vertex of $G$ decreases the Roman domination number.
 By $\mathcal{R}_{CV}$ we denote the class of all Roman domination vertex critical graphs. 
 Results on this class can be found in Rad and Volkmann \cite{rv1} and Hansberg et al. \cite{hrv}.

\begin{theorem} \label{erc2}
 Let $G$ be a connected graph. 
\begin{itemize}
\item[(i)]  If $V_{R}^-(G) \not = V(G)$ then 
             $b_{R}(G)              \leq \min\{d_G(u) - \gamma_{R} (G-u) + \gamma_{R} (G) 
             \mid u \in V_{R}^0(G)  \cup V_{R}^+(G)\} \leq \Delta (G)$.
\item[(ii)] If  $b_{R} (G) > \Delta (G)$ then  $G$  is in $\mathcal{R}_{CV}$. 
\end{itemize}
\end{theorem}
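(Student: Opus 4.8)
The plan is to prove (i) directly and to obtain (ii) as its contrapositive. Fix a vertex $u \in V_{R}^0(G) \cup V_{R}^+(G)$, which exists precisely because $V_{R}^-(G) \neq V(G)$; by definition $\gamma_R(G-u) \geq \gamma_R(G)$. Let $G^*$ be the graph obtained from $G$ by deleting all $d_G(u)$ edges incident with $u$, so that $u$ is isolated in $G^*$ and $G^*$ is the disjoint union of $G-u$ with the single vertex $u$. Since an isolated vertex must receive label $1$ or $2$ under every RDF, it contributes exactly $1$ to the optimum, giving $\gamma_R(G^*) = \gamma_R(G-u) + 1 \geq \gamma_R(G) + 1 > \gamma_R(G)$.

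Next I would track $\gamma_R$ as these edges are deleted one at a time. Writing the incident edges as $e_1, \dots, e_m$ with $m = d_G(u)$, set $G_0 = G$ and $G_i = G_{i-1} - e_i$, so $G_m = G^*$. By Lemma \ref{rv1}, $\gamma_R(G_i) - \gamma_R(G_{i-1}) \in \{0,1\}$ for each $i$, hence the sequence $\gamma_R(G_0), \dots, \gamma_R(G_m)$ is nondecreasing with unit jumps. Consequently the number of indices at which the value strictly increases equals the net change $\gamma_R(G_m) - \gamma_R(G_0) = \gamma_R(G-u) + 1 - \gamma_R(G)$, and therefore the number of indices at which it stays constant equals $m - (\gamma_R(G-u) + 1 - \gamma_R(G)) = d_G(u) - \gamma_R(G-u) + \gamma_R(G) - 1$.

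The key step is to locate the first strict increase. Since at most $d_G(u) - \gamma_R(G-u) + \gamma_R(G) - 1$ deletions can leave $\gamma_R$ unchanged, after at most $d_G(u) - \gamma_R(G-u) + \gamma_R(G)$ deletions we must reach a graph $G_j$ with $\gamma_R(G_j) = \gamma_R(G) + 1 \neq \gamma_R(G)$. As $G_j$ is obtained from $G$ by removing $j \leq d_G(u) - \gamma_R(G-u) + \gamma_R(G)$ edges, it follows that $b_R(G) \leq d_G(u) - \gamma_R(G-u) + \gamma_R(G)$. Minimizing over all admissible $u$ gives the stated minimum, and since $\gamma_R(G-u) \geq \gamma_R(G)$ forces $-\gamma_R(G-u) + \gamma_R(G) \leq 0$, each such term is at most $d_G(u) \leq \Delta(G)$, which proves the final inequality in (i).

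Finally, (ii) follows by contraposition: if $G \notin \mathcal{R}_{CV}$ then some vertex lies outside $V_{R}^-(G)$, that is $V_{R}^-(G) \neq V(G)$, and (i) yields $b_R(G) \leq \Delta(G)$; equivalently, $b_R(G) > \Delta(G)$ forces $V_{R}^-(G) = V(G)$, which is exactly membership in $\mathcal{R}_{CV}$. The only point demanding care is the bookkeeping of strict increases against constant steps along the chain $G_0, \dots, G_m$; everything else is a routine application of Lemma \ref{rv1} together with the value of $\gamma_R$ on an isolated vertex.
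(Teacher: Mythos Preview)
Your proof is correct and follows essentially the same approach as the paper: delete the edges incident with a vertex $u\in V_R^0(G)\cup V_R^+(G)$, use that $\gamma_R(G^*)=\gamma_R(G-u)+1$, and invoke Lemma~\ref{rv1} to control how $\gamma_R$ changes along the way. The paper isolates this in a separate lemma and bounds $\gamma_R(G-S)$ directly for \emph{any} subset $S\subseteq E_{\{u\}}$ of size at least $d_G(u)-\gamma_R(G-u)+\gamma_R(G)$ (and treats $V_R^0$ and $V_R^+$ in two clauses), whereas you argue step by step along a fixed ordering and locate the first strict increase; the bookkeeping differs but the underlying idea is identical.
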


\begin{remark} \label{rem1}
Let $G$ be any connected graph of order $n \geq 2$. 
Denote by $S$ the set of all vertices of $\widehat{G}$ each of which is  adjacent to  a vertex of degree $1$. 
Then $f=(V(\widehat{G}) - S; \emptyset; S)$  is a RDF on $\widehat{G}$.
 Since the weight of $f$ is $4n$, by Lemma \ref{what} it follows that 
 $f$ is a $\gamma_{R}$-function on $\widehat{G}$.  
  Theorem \ref{vvv}(i) now implies $V(\widehat{G}) = V^-_{R}(\widehat{G})  \cup V^0_{R}(\widehat{G}) $. 
Since $\gamma_R(P_5) = 4$ and since 
the central vertex of $P_5$ is in $V^0_R(P_5)$, $V(G) \subset V^0_{R}(\widehat{G}) $. 
Labelling the vertices of each $P_5$ of $\widehat{G}$ with $(1, 1, 0, 2, 0)$ yields 
a $\gamma_{R}$-function on $\widehat{G}$. It follows by Theorem \ref{vc} that 
$V(G) = V^0_{R}(\widehat{G}) $. 
All this  together with Lemma \ref{what} shows that the bound  
in Theorem \ref{erc2}(i) is attainable   for all graphs $\widehat{G}$. 
Furthermore, for any graph  $\widehat{G}$ the bound in Theorem \ref{erc3}  
is attainable too. 
\end{remark}

To prove Theorem \ref{erc2}, we need the following lemma:

\begin{lemma} \label{erc21}
Let $G$ be a connected graph. 
For any subset $U \subsetneq V(G)$, let  $E_U$  denote 
the set of all edges each of which joins $U$ and $V(G)-U$. 
\begin{itemize}
\item[(i)] If $v \in V_{R}^0(G) \cup V_{R}^+(G)$ 
         then $\gamma_{R}(G-E_{\{v\}}) > \gamma_{R}(G)$.
\item[(ii)] If $x \in V_{R}^+(G)$ then 
            $1 \leq  \gamma_{R} (G-x) - \gamma_{R}(G) \leq d_G(x) -2$ 
            and for any subset $S \subseteq E_{\{x\}}$ 
            with $|S| \geq d_G(x) - \gamma_{R} (G-v) + \gamma_{R}(G)$,
            $\gamma_{R}(G-S) > \gamma_{R}(G) $.   
\end{itemize}
\end{lemma}
\begin{proof}
(i) We have  $\gamma_{R}(G-E_{\{v\}}) = \gamma_{R}(G-v) + 1 > \gamma_{R}(G)$. 

 (ii)  Denote $p = \gamma_{R} (G-x) - \gamma_{R} (G)$. 
 Let $f$ be any $\gamma_{R}$-function on $G$. 
 Since $p > 0$, by Theorem \ref{vvv}(i) it follows that  $f(x) = 2$. 
Hence   $h = (V_0^f - N_G(x); V_1^f \cup (N_G(x) - V_2^f); V_2^f-\{x\})$ is a RDF  on $G-x$. 
 But then $\gamma_{R}(G) + p = \gamma_{R} (G-x) \leq h(V(G-x))  \leq \gamma_{R}(G) + d_G(x) -2$. 
 Hence $1 \leq p \leq d_G(x) -2$. 
For any set $S \subseteq E_{\{x\}}$ with $|S| \geq d_G(x) - p$ we have $\gamma_{R}(G-S) \geq \gamma_{R}(G-E_{\{x\}}) - |E_{\{x\}}| + |S| 
 \geq (\gamma_{R}(G-x) + 1) - d_G(x) + (d_G(x) -p) = 
 \gamma_{R}(G) +1$, where the first inequality follows from Lemma \ref{rv1}.   
 \end{proof}

\begin{proof}[Proof of Theorem \ref{erc2}]
(i) The result follows immediately by Lemma \ref{erc21}. 

(ii) Immediately by (i).   
 \end{proof}
 
 Rad and Volkmann \cite{rv}  as well as Akbari et al. \cite{akq}
 gave upper bounds on the Roman bondage number of planar graphs. 
 Upper bounds on the Roman bondage number of graphs 2-cell embeddable on
topological surfaces in terms  of orientable/non orientable genus and maximum degree,  
  are obtained by the present author in \cite{samcmj}.

\begin{theorem} \label{restrtot}
Let  $G$ be a connected graph with $\Delta (G) \geq 2$. 
\begin{itemize}
\item[(i)] Then $b_{R}(G)  \leq 2ad(G) + \Delta(G) -3$.
\item[(ii)] Let $G$ be embeddable on a surface $\mathbb{M}$ whose Euler
 characteristic $\chi$ is as large as possible. 
 If $G$ has order $n$ and girth $k < \infty$ then:
 \[
  b_{R}(G) \leq \frac{4k}{k-2}(1 - \frac{\chi}{n}) + \Delta(G) -3 \leq -\frac{12\chi}{n} + \Delta(G) +9.
 \]
\end{itemize}
\end{theorem}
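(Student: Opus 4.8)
The plan is to obtain part (i) by combining the Hartnell--Rall pair of Theorem \ref{hra} with the path-of-length-$2$ bound of Theorem \ref{erc3}, and then to deduce part (ii) mechanically: the first inequality by feeding the average-degree estimate of Lemma \ref{SGZ} into (i), and the second by an elementary bound on the coefficient $\frac{4k}{k-2}$. First I would invoke Theorem \ref{hra} to get two vertices $u,v$ that are either adjacent or at distance $2$ and satisfy $d_G(u)+d_G(v)\le 2ad(G)$. In each case the aim is to exhibit a path $x,y,z$ of length $2$ whose three degrees sum to at most $2ad(G)+\Delta(G)$; then Theorem \ref{erc3}, after discarding the nonpositive term $-|N_G(x)\cap N_G(y)|$, immediately yields $b_R(G)\le 2ad(G)+\Delta(G)-3$.

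If $u,v$ lie at distance $2$, a common neighbour $w$ produces the path $u,w,v$, and since $d_G(w)\le\Delta(G)$ the degree sum is at most $2ad(G)+\Delta(G)$. If instead $u,v$ are adjacent, I would first observe that connectedness together with $\Delta(G)\ge 2$ forces at least one endpoint, say $u$, to possess a neighbour $w\ne v$ (otherwise $G=K_2$ and $\Delta(G)=1$, a contradiction); then $v,u,w$ is the required length-$2$ path, again contributing a third degree $d_G(w)\le\Delta(G)$. The one point that genuinely needs care is this existence of a second neighbour in the adjacent case, and it is the main (though mild) subtlety of the whole argument; everything else in part (i) is routine.

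For the first inequality of part (ii), Lemma \ref{SGZ} supplies $ad(G)\le\frac{2k}{k-2}\bigl(1-\frac{\chi}{n}\bigr)$, so substituting into part (i) gives
\[
b_R(G)\le 2ad(G)+\Delta(G)-3\le \frac{4k}{k-2}\Bigl(1-\frac{\chi}{n}\Bigr)+\Delta(G)-3,
\]
which is exactly the claimed middle expression.

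The second inequality reduces, after cancelling $\Delta(G)$, to showing
\[
\frac{4k}{k-2}\Bigl(1-\frac{\chi}{n}\Bigr)\le 12-\frac{12\chi}{n}.
\]
Writing $A=\frac{4k}{k-2}-12$, this is equivalent to $A\bigl(1-\frac{\chi}{n}\bigr)\le 0$. Since $k\ge 3$ the coefficient satisfies $\frac{4k}{k-2}\le 12$, so $A\le 0$; and because the girth is finite we have $n\ge k\ge 3$ while $\chi\le 2$, whence $1-\frac{\chi}{n}>0$. The product is therefore nonpositive and the inequality follows (with equality when $k=3$). No step here poses a real obstacle, so the entire weight of the theorem rests on the case analysis of part (i).
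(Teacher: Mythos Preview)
Your proposal is correct and follows essentially the same route as the paper: Hartnell--Rall plus Theorem~\ref{erc3} for part~(i), then Lemma~\ref{SGZ} for part~(ii). Your treatment is in fact more careful than the paper's --- you spell out the adjacent-vs-distance-2 case split and the existence of the third vertex in~(i), and you explicitly justify the second inequality of~(ii) via $\frac{4k}{k-2}\le 12$ and $1-\chi/n>0$, whereas the paper simply asserts that Lemma~\ref{SGZ} and~(i) together imply the result.
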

\begin{proof}
(i)  If $G$ is a complete graph then  the result is obvious. 
Hence we may assume $G$ has nonadjacent vertices. 
 Theorem \ref{hra} implies that there are 
$2$ vertices, say $x$ and $y$, that are either adjacent
or at distance 2 from each other, with the property that 
$d_G(x) + d_G (y) \leq 2ad (G)$. 
 Since $G$ is connected and $\Delta (G) \geq 2$, 
 there is a vertex $z$ such that $xyz$ or $xzy$ is a path.  
 In either case by Theorem \ref{erc3}   we have 
 $b_{R}(G) \leq d_G(x) + d_G(y) + d_G(z) - 3 \leq 2ad(G) + \Delta(G) -3$. 
 
 (ii) Lemma \ref{SGZ} and (i) together imply the result. 
\end{proof}

\begin{remark} \label{dmax}
Let $\mathbb{M}$ be a surface.   
Denote $\delta_{max}^{\mathbb{M}} = \max\{\delta (H) \mid \mbox{a graph \ } H \mbox{\ is \ } 
2\mbox{-cell\ }$ $\mbox{embedded in\ }  \mathbb{M}\}$.
Let $G$ be a connected graph $2$-cell embeddable on $\mathbb{M}$
 and $\delta (G) = \delta_{max}^{\mathbb{M}}$. 
 By Lemma \ref{what} it immediately follows 
 $b_R(\widehat{G}) = \delta_{max}^{\mathbb{M}} +2$. 
 Note that (a) if $\chi(\mathbb{M}) \leq 1$ then  
$\delta_{max}^{\mathbb{M}} \leq \left\lfloor (5+\sqrt{49-24\chi(\mathbb{M})})/2\right\rfloor$
 (see Sachs \cite{Sachs}, pp. 226-227), 
 and (b) it is well known  that $\delta_{max}^{\mathbb{S}_0} = \delta_{max}^{\mathbb{N}_1} = 5$ and
  $\delta_{max}^{\mathbb{N}_2} = \delta_{max}^{\mathbb{N}_3} = \delta_{max}^{\mathbb{S}_1} =6$.
\end{remark}

In \cite{akq}, Akbari, Khatirinejad and Qajar recently prove that $b_R(G) \leq 15$ provided $G$ is a planar graph. 
As the next result shows, more is true.

\begin{theorem} \label{ppp}
Let $G$ be a connected graph $2$-cell embedded on a surface $\mathbb{M}$ with non negative 
Euler characteristic  and let $\Delta (G) \geq 2$.  Then $b_R(G) \leq 15$. 
	 \end{theorem}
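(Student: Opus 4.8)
The plan is to separate the argument by the maximum degree, and for large $\Delta(G)$ by the minimum degree, playing the average-degree bound of Theorem \ref{restrtot}(i) against the path bound of Theorem \ref{erc3} and the vertex bound hidden in Theorem \ref{erc2}(i). First I would record the sparsity of $G$. If $G$ is a tree the claim is immediate from the vertex bound below applied to a leaf, so I may assume $G$ has finite girth $k \ge 3$; then Lemma \ref{SGZ} gives $ad(G) \le \frac{2k}{k-2}(1 - \chi/n) \le 6$, the extreme case being $k = 3$, $\chi = 0$. Feeding $ad(G) \le 6$ into Theorem \ref{restrtot}(i) yields $b_R(G) \le 2ad(G) + \Delta(G) - 3 \le \Delta(G) + 9$, so $\Delta(G) \le 6$ already forces $b_R(G) \le 15$. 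It therefore remains to treat $\Delta(G) \ge 7$, and by Remark \ref{dmax} (Sachs) every such $G$ has $\delta(G) \le 6$.

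So assume $\Delta(G) \ge 7$ and split on $\delta(G)$. When $\delta(G) \ge 5$ the hypotheses pair exactly with Theorem \ref{56}: on $\mathbb{S}_0,\mathbb{N}_1$ part (i) applies unconditionally, and on $\mathbb{S}_1,\mathbb{N}_2$ part (ii) applies precisely because $\Delta(G) \ge 7$; in every case $G$ has an edge $xy$ with $d_G(x) + d_G(y) \le 11$. I would then extend $xy$ to a path $x,y,z$ (or $w,x,y$) of length $2$ whose three degrees sum to at most $18$, so that Theorem \ref{erc3} gives $b_R(G) \le d_G(x) + d_G(y) + d_G(z) - 3 \le 15$. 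Since $\delta(G) \ge 5$ and $ad(G) \le 6$, the identity $\sum_v (d_G(v) - 5) = 2|E(G)| - 5n \le n$ forces vertices of degree at most $6$ to be abundant, and I would use this to locate a neighbour of $x$ or $y$ of degree at most $7$ (equivalently, a path of length $2$ through three vertices of degree at most $6$), closing this case.

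When $\delta(G) \le 4$ I would instead exploit the sharper form of Theorem \ref{erc2}(i): if $G \notin \mathcal{R}_{CV}$ then $b_R(G) \le \min\{ d_G(u) - \gamma_R(G - u) + \gamma_R(G) : u \in V_R^0(G) \cup V_R^+(G)\}$, and for $u \in V_R^0(G)$ this is just $d_G(u)$. Thus a single vertex of degree at most $15$ lying in $V_R^0(G) \cup V_R^+(G)$ settles the claim, and sparsity guarantees many low-degree vertices. The genuinely hard subcase is $G \in \mathcal{R}_{CV}$ with $\delta(G) \le 4$ and $\Delta(G) \ge 7$, where the vertex bound is unavailable; here I would again search for a light path, using Theorems \ref{vc} and \ref{vvv} (and Lemma \ref{erc21}) to control the $\gamma_R$-functions around a minimum-degree vertex.

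The main obstacle, in both the $\delta(G) \ge 5$ case and the critical low-degree subcase, is the same: promoting a \emph{light edge} (or a low-degree vertex) to a \emph{light path of length two} of total degree at most $18$. The clean inequality $b_R(G) \le \Delta(G) + 9$ only pays for the third vertex when the surrounding degrees are small, so the crux is a counting (discharging) step showing that a degree-$5$ endpoint of a light edge cannot have all its neighbours heavy while $ad(G) \le 6$ — equivalently, that the subgraph induced by the vertices of degree at most $6$ contains a path of length two. I expect this local-sparsity estimate, rather than any manipulation of $\gamma_R$ itself, to be where the real work lies.
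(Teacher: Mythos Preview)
Your decomposition by $\delta(G)$ is not the right one, and the obstacle you yourself flag at the end --- upgrading a light \emph{edge} to a light \emph{path of length two} --- is a genuine gap that is not closed by any global counting. The inequality $ad(G)\le 6$ guarantees many vertices of degree at most $6$, but it says nothing about the degrees of the \emph{neighbours} of your Wernicke edge $xy$; a light edge can sit among high-degree vertices while the low-degree vertices live elsewhere in the graph. So ``locate a neighbour of $x$ or $y$ of degree at most $7$'' is unsupported, and in the $\delta(G)\le 4$ branch the $\mathcal{R}_{CV}$ subcase has no plan at all.

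The paper avoids all of this by passing to the subgraph $G_{\ge 6}=G-V_{\le 5}$, where $V_{\le 5}=\{v:d_G(v)\le 5\}$. Since $G_{\ge 6}$ also embeds on $\mathbb{M}$, it contains a vertex $u$ with $d_{G_{\ge 6}}(u)\le 6$. If some such $u$ has exactly two neighbours $v_1,v_2\in V_{\le 5}$, the path $v_1,u,v_2$ has $G$-degree sum at most $5+8+5$ and Theorem~\ref{erc3} gives $b_R(G)\le 15$. If $u$ has three neighbours $v_1,v_2,v_3\in V_{\le 5}$, the path bound only yields $16$, and a different trick is used: delete all edges incident with $v_1,v_2,v_3$ (at most $15$ edges); any $\gamma_R$-function of the resulting graph labels each isolated $v_i$ with $1$, and relabelling $v_1,v_2,v_3$ with $0$ and $u$ with $2$ produces a strictly lighter RDF on $G$, so $\gamma_R$ increased and $b_R(G)\le 15$. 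In the remaining case every vertex of $G_{\ge 6}$-degree at most $6$ has at most one neighbour in $V_{\le 5}$, which forces $\delta(G_{\ge 6})\ge 5$; now Theorem~\ref{56} is applied to $G_{\ge 6}$ (not to $G$) to get an edge $xy$ with $d_{G_{\ge 6}}(x)+d_{G_{\ge 6}}(y)\le 11$, hence $d_G(x)+d_G(y)\le 13$, and the low-degree endpoint $x$ automatically has a neighbour $v\in V_{\le 5}$ --- the third vertex of the light path comes for free from the construction of $G_{\ge 6}$. (A short separate argument handles the boundary case where $G_{\ge 6}$ is a $6$-regular triangulation.) There is no split on $\delta(G)$, no appeal to $\mathcal{R}_{CV}$ or Theorem~\ref{erc2}, and no discharging.
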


\begin{proof}
If $2 \leq \Delta (G) \leq 6$ then  $b_{R}(G)   \leq 3\Delta(G) -3 \leq 15$, because of Theorem \ref{erc3}. 
So, assume $\Delta (G) \geq 7$. 
Denote $V_{\leq 5}= \{v \in V(G) \mid d_G(v) \leq 5\}$   and $G_{\geq 6} = G - V_{\leq 5}$. 
 Since $\chi(\mathbb{M}) \geq 0$, $\delta(G) \leq 6$ (see Remark \ref{dmax}).  
 If $\delta (G)=6$ then $G$ is a $6$-regular triangulation on the torus or in the Klein Bottle, 
 a contradiction with $\Delta (G) \geq 7$. 
 So, $\delta (G) \leq 5$ and then $V_{\leq 5}$ is not empty. 
 Since $G_{\geq 6}$ is embedded without crossings on $\mathbb{M}$ and $\chi (\mathbb{M}) \geq 0$, 
 there is a vertex $u \in V(G_{\geq 6})$ with $d_{G_{\geq 6}}(u) \leq 6$. 
  If $u$ has exactly $2$ neighbors belonging to $V_{\leq 5}$ then 
 again by Theorem \ref{erc3}, $b_{R}(G) \leq 15$.
 Now let all $v_1, v_2, v_3 \in V_{\leq 5}$ be adjacent to $u$.
 Denote by $E_1$ the set of all edges of $G$ which are incident to at least one of $v_1, v_2$ and $v_3$.
 Since $v_1, v_2$ and $v_3$ are isolated vertices in $G-E_1$, 
 for any $\gamma_{\mathcal{P}R}$-function $g$ on $G-E_1$, $g(v_1) = g(v_2) = g(v_3)=1$. 
 Define now $f : V(G) \rightarrow \{0, 1, 2\}$ by $f(v_1) = f(v_2) = f(v_3) = 0$, $f(u) = 2$ 
 and $f(v) = g (v)$ for every $v \in V(G) - \{u, v_1, v_2, v_3\}$. 
 Clearly $f$ is a RDF on $G$ with $\gamma_{R}(G) \leq f(V(G)) < g(V(G-E_1)) = \gamma_{R}(G-E_1)$.
 Thus, $b_R(G) \leq |E_1| \leq d_G(v_1) + d_G(v_2) + d_G(v_3) \leq 15$. 

 So, it remains to consider the case where each vertex of degree at most $6$ in 
$G_{\geq 6}$  has   no more than one neighbor in $V_{\leq 5}$. 
 It immediately follows that $\delta(G_{\geq 6}) \geq 5$. 
     First assume $\delta(G_{\geq 6})=5$. 
     By Theorem\ref{56} it follows that there is an edge $xy \in E(G_{\geq 6})$ such that 
     $d_{G_{\geq 6}}(x)  + d_{G_{\geq 6}} (y) \leq 11$. 
     Hence $d_G(x) + d_G (y) \leq 13$. Let without loss of generality 
     $d_{G_{\geq 6}}(x)  \leq d_{G_{\geq 6}} (y)$. 
     Then $x$ has exactly one neighbor in $V_{\leq 5}$, say $v$.
          By Theorem \ref{erc3} applied to the path $v, x, y$ we have $b_{R} (G) \leq 5 + 13 - 3 = 15$.
Now let $\delta(G_{\geq 6}) \geq 6$. 
     But then $G_{\geq 6}$ is a $6$-regular triangulation on the torus or in the  Klein bottle. 
		Since $\Delta (G) \geq 7$, $G \not= G_{\geq 6}$ and there is 
		 a path $x,y,z$ in $G$, where $d_G(z) \leq 5$,  and both $x$ and $y$ are in $V(G_{\geq 6})$. 
		Since clearly $|N(x) \cap N(y)| \geq 2$, again using Theorem \ref{erc3} 
		we obtain $b_{\mathcal{P}R} (G) \leq  7 + 7 + 5 - 3 -2 = 14$. 
\end{proof}

\end{document}